\numberwithin{equation}{section}
\numberwithin{figure}{section}
\numberwithin{table}{section}
\theoremstyle{plain}
\newtheorem{thm}{\protect\theoremname}[section]
\theoremstyle{definition}
\newtheorem{defn}[thm]{\protect\definitionname}
\providecommand{\definitionname}{Definition}
\providecommand{\theoremname}{Theorem}
\begin{document}
\title{Abundance of progression in large set for non commutative semigroup}
\author{Sujan Pal}
\address{Department of Mathematics, University of Kalyani, Kalyani, Nadia-741235,
West Bengal, India}
\email{\emph{sujan2016pal@gmail.com}}
\begin{abstract}
The notion of abundance of certain type of configurations in certain
large sets, first proved by Furstenberg and Glasner in 1998. After
that many authors investigated abundance of different types of configurations
in different types of large sets. Hindman, Hosseini, Strauss and Tootkaboni
recently introduced another notion of large sets called $CR$ sets.
Debnath and De proved abundance of arithmetic progressions in $CR$
sets for commutative semigroups. In the present article we investigate
abundance of progressions in large sets for non-commutative semigroups.
\end{abstract}

\maketitle

\section{Introduction}

A subset $A$ of the set of integers $\mathbb{Z}$ is called syndetic
if gaps in it are bounded, and it is called thick if it contains arbitrary
length intervals. Sets which can be expressed as the intersection
of thick and syndetic sets are called piecewise syndetic. These notions
can be extended for general semigroups. For, general semigroup $(S,\cdot)$,
a set $A\subset S$ is said to be syndetic in if there exists a finite
set $F\subset S$ such that translations by the elements of $F$ cover
$S$. A set $A\subset S$ is said to be thick if for every finite
set $F\subset S$, there exists a translation of $F$ contained in
$A$. Now a set $A\subset S$ is said to be piecewise syndetic set
if can be expressed as the intersection of thick and syndetic sets
are called piecewise syndetic. There are also other equivalent forms
of definitions of piecewise syndetic sets.

One of the famous Ramsey theoretic results is so called van der Waer
den\textquoteright s Theorem \cite{key-7}, which states that at least
one cell of any partition of $\mathbb{N}$ contains arithmetic progression
of arbitrary length. Since arithmetic progressions are invariant under
shifts, it follows that every piecewise syndetic set contains arbitrarily
long arithmetic progressions. The following theorem, which guarantees
the abundance of arithmetic progressions, was first proved by H. Furstenberg
and E. Glasner \cite{key-4}.
\begin{thm}
\label{FW ab}Let $k\in\mathbb{N}$ and assume that $S\subset\mathbb{Z}$
is piecewise syndetic. Then $\left\{ \left(a,d\right):\left\{ a,a+d,...,a+kd\right\} \subset S\right\} $
is piecewise syndetic in $\mathbb{Z}^{2}$.
\end{thm}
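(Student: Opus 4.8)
The plan is to pass to the Stone--\v{C}ech compactification and to reduce the assertion to the behaviour of the smallest two-sided ideal. Recall the standard characterisation: a subset $A$ of a discrete semigroup $G$ is piecewise syndetic if and only if $\overline{A}\cap K(\beta G)\neq\emptyset$, where $K(\beta G)$ is the smallest ideal of $\beta G$. Hence it suffices to produce an ultrafilter $q\in K(\beta\mathbb{Z}^{2})$ with $P:=\left\{(a,d):\{a,a+d,\dots,a+kd\}\subseteq S\right\}\in q$. For $0\le i\le k$ let $\phi_{i}:\mathbb{Z}^{2}\to\mathbb{Z}$ be the homomorphism $\phi_{i}(a,d)=a+id$ and let $\widetilde{\phi_{i}}:\beta\mathbb{Z}^{2}\to\beta\mathbb{Z}$ be its continuous extension. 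Since $P=\bigcap_{i=0}^{k}\phi_{i}^{-1}(S)$, and since $\phi^{-1}(S)\in q$ iff $S\in\widetilde{\phi}(q)$ for any such homomorphism, it is enough to find $q\in K(\beta\mathbb{Z}^{2})$ with $\widetilde{\phi_{i}}(q)=p$ for every $i$, where $p$ is one fixed ultrafilter chosen in $\overline{S}\cap K(\beta\mathbb{Z})$ (this set is nonempty precisely because $S$ is piecewise syndetic).

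The core of the argument is a diagonal computation inside a product semigroup. Put $\Psi=(\widetilde{\phi_{0}},\dots,\widetilde{\phi_{k}}):\beta\mathbb{Z}^{2}\to(\beta\mathbb{Z})^{k+1}$, the codomain carrying the coordinatewise operation, and set $\overline{D}=\Psi(\beta\mathbb{Z}^{2})$; this is a compact subsemigroup of $(\beta\mathbb{Z})^{k+1}$, namely the closure of $D=\{(a,a+d,\dots,a+kd):a,d\in\mathbb{Z}\}$. Because $\phi_{i}(n,0)=n$ for all $i$, the diagonal $\{(n,\dots,n):n\in\mathbb{Z}\}$ lies in $D$, so $\{(r,\dots,r):r\in\beta\mathbb{Z}\}\subseteq\overline{D}$. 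Now invoke two standard facts: the smallest ideal of a finite product is the product of the smallest ideals, $K((\beta\mathbb{Z})^{k+1})=K(\beta\mathbb{Z})^{k+1}$; and if a closed subsemigroup $T'$ of a compact right topological semigroup $T$ meets $K(T)$, then $K(T')=T'\cap K(T)$. Taking $p\in\overline{S}\cap K(\beta\mathbb{Z})$, the point $(p,\dots,p)$ lies in $\overline{D}\cap K((\beta\mathbb{Z})^{k+1})$, so this intersection is nonempty and $K(\overline{D})=\overline{D}\cap K((\beta\mathbb{Z})^{k+1})\ni(p,\dots,p)$. Finally $\Psi:\beta\mathbb{Z}^{2}\to\overline{D}$ is a continuous surjective homomorphism, hence $\Psi(K(\beta\mathbb{Z}^{2}))=K(\overline{D})$; choose $q\in K(\beta\mathbb{Z}^{2})$ with $\Psi(q)=(p,\dots,p)$. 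Then $\widetilde{\phi_{i}}(q)=p$ for every $i$, so $\phi_{i}^{-1}(S)\in q$ for every $i$, so $P\in q$; since $q\in K(\beta\mathbb{Z}^{2})$ we get $\overline{P}\cap K(\beta\mathbb{Z}^{2})\neq\emptyset$, i.e.\ $P$ is piecewise syndetic in $\mathbb{Z}^{2}$.

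The steps I expect to require the most care are the ``soft'' ones in the middle paragraph: verifying that $\overline{D}$ really is a subsemigroup (one uses that $D\subseteq\mathbb{Z}^{k+1}$ lies in the topological centre, so left translations by elements of $D$ are continuous), and correctly applying the product formula for $K$, the passage of $K$ to a closed subsemigroup meeting the minimal ideal, and $\Psi(K(\beta\mathbb{Z}^{2}))=K(\overline{D})$ for the surjective map $\Psi$. Note that van der Waerden's theorem is not invoked explicitly on this route; it is folded into the existence and the combinatorial meaning of $K(\beta\mathbb{Z})$. An alternative, closer to Furstenberg and Glasner's original argument, runs through topological dynamics: piecewise syndeticity of $S$ produces a minimal point beneath the indicator $\mathbf{1}_{S}$ in a subshift, and the topological multiple recurrence theorem (equivalent to van der Waerden) applied in a suitable product system yields the abundance directly; that route is more explicit but heavier, and already the case $k=1$ is illuminating, as there a bare greedy argument inside the witnessing intervals of $S$ suffices, with no colouring needed.
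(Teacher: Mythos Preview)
The paper does not prove this theorem at all; it is stated in the introduction as a known result and attributed to Furstenberg and Glasner with a citation to \cite{key-4}. There is therefore nothing in the paper to compare your argument against.

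That said, your sketch is correct. It is essentially the ultrafilter proof of Bergelson and Hindman \cite{key-1}: pass to $\beta\mathbb{Z}^{2}$, push forward along the $k+1$ homomorphisms $\phi_{i}(a,d)=a+id$, and use the algebra of $K(\cdot)$ in the product $(\beta\mathbb{Z})^{k+1}$ to land on the diagonal point $(p,\dots,p)$. The three ``soft'' facts you flag --- $K\bigl(\prod T_{j}\bigr)=\prod K(T_{j})$, the intersection formula $K(T')=T'\cap K(T)$ when a closed subsemigroup $T'$ meets $K(T)$, and $\Psi\bigl(K(\beta\mathbb{Z}^{2})\bigr)=K(\overline{D})$ for a continuous surjective homomorphism --- are all standard results in \cite{key-6}, so the argument closes. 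This route is genuinely different from the original Furstenberg--Glasner proof, which, as you note, goes through topological dynamics and the multiple recurrence theorem; the ultrafilter version trades that machinery for the algebraic structure of the smallest ideal and is the one that generalises most cleanly to the other notions of largeness treated later in the present paper.
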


In \cite{key-1} author extended theorem \ref{FW ab} for different
types of large sets. In a recent work, \cite{Key-2} Bergelsoon and
Glasscock introduced a new such notion of largeness for commutative
semigroup called $CR$-sets.
\begin{defn}
\label{Comm CR set} Let $\left(S,+\right)$ be a commutative semigroup.
$A\subseteq S$ is said to be a $CR$-set if for each $k\in\mathbb{N}$,
there exists $r\in\mathbb{N}$ such that whenever $M$ is an $r\times k$
matrix with entries from $S$, there exist $a\in S$ and nonempty
$H\subseteq\left\{ 1,2,\cdots,r\right\} $ such that for each $j\in\left\{ 1,2,\cdots,k\right\} $

\[
a+\sum_{t\in H}m_{i,j}\in A.
\]
\end{defn}

Later in \cite{Key-5} Hindman, Hosseini, Strauss and Tootkaboni rephrased
the definition, also introduce finner gradation named $k-CR$ set,
where $k\in\mathbb{N}$. This definition also helps us to understand
the difference between $J$-sets and $CR$-sets.
\begin{defn}
\label{new def cr}Let $\left(S,+\right)$ be a commutative semigroup
and let $A\subseteq S$. 
\begin{enumerate}
\item $A$ is said to be a $J$-set if and only if whenever $F\in\mathcal{P}_{f}\left(^{\mathbb{N}}S\right)$,
there exist $a\in S$ and $H\in\mathcal{P}_{f}\left(\mathbb{N}\right)$
such that for each $f\in F$, 
\[
a+\sum_{t\in H}f\left(t\right)\in A.
\]
\item $A$ is said to be a $CR$ set if for each $k\in\mathbb{N}$ there
exists $r\in\mathbb{N}$ and whenever $F\in\mathcal{P}_{f}\left(\,^{\mathbb{N}}S\right)$
with $\mid F\mid\leq k$ , there exist $a\in S$ and $H\in\mathcal{P}_{f}\left(\mathbb{N}\right)$
with $\max H\leq r$ such that for each $f\in F$, 
\[
a+\sum_{t\in H}f\left(t\right)\in A.
\]
\item $A$ is said to be a $k-CR$ set if there exists $r\in\mathbb{N}$
and whenever $F\in\mathcal{P}_{f}\left(\,^{\mathbb{N}}S\right)$ with
$\mid F\mid\leq k$ , there exist $a\in S$ and $H\in\mathcal{P}_{f}\left(\mathbb{N}\right)$
with $\max H\leq r$ such that for each $f\in F$, 
\[
a+\sum_{t\in H}f\left(t\right)\in A.
\]
\end{enumerate}
\end{defn}

Then clearly $CR$-sets are $J$-sets.

The notion of $CR$-sets have obvious generalization to non commutative
case.
\begin{defn}
Let $\left(S,\cdot\right)$ be a semigroup and let $A\subseteq S$.
\begin{enumerate}
\item $A$ be a\emph{ combinatorially rich set} ($CR$-set) if and only
if for each $k\in\mathbb{N}$ there exist $r\in\mathbb{N}$ and $m\in\mathbb{N}$
such that for each $F\in\mathcal{P}_{f}\left(\,^{\mathbb{N}}S\right)$
with $\mid F\mid\leq k$, there exist $\overrightarrow{a}=\left(a_{1},a_{2},\cdots,a_{m},a_{m+1}\right)\in S^{m+1}$,
and $t_{1}<t_{2}<\cdots<t_{m}\leq r$ in $\mathbb{N}$ such that for
each $f\in F$, 
\[
a_{1}f\left(t_{1}\right)a_{2}f\left(t_{2}\right)\cdots a_{m}f\left(t_{m}\right)a_{m+1}\in A.
\]
\item $A$ be a\emph{ $k-$combinatorially rich set} ($k-CR$-set) if and
only if there exist $r\in\mathbb{N}$ and $m\in\mathbb{N}$ such that
for each $F\in\mathcal{P}_{f}\left(\,^{\mathbb{N}}S\right)$ with
$\mid F\mid\leq k$, there exist $\overrightarrow{a}=\left(a_{1},a_{2},\cdots,a_{m},a_{m+1}\right)\in S^{m+1}$,
and $t_{1}<t_{2}<\cdots<t_{m}\leq r$ in $\mathbb{N}$ such that for
each $f\in F$, 
\[
a_{1}f\left(t_{1}\right)a_{2}f\left(t_{2}\right)\cdots a_{m}f\left(t_{m}\right)a_{m+1}\in A.
\]
\end{enumerate}
\end{defn}

In \cite{Key-5} authors proved partition regularity of $CR$- sets.
\begin{thm}
\label{PRCR}Let $\left(S,\cdot\right)$ be a semigroup and let $A_{1}$
and $A_{2}$ be subsets of $S$. If $A_{1}\cup A_{2}$ is a $CR$-set
in $S$, then either $A_{1}$ or $A_{2}$ is a $CR$-set in $S$. 
\end{thm}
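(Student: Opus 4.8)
The plan is to reduce everything to the graded notion of a $k$-$CR$-set. Observe first that, directly from the definitions, $A$ is a $CR$-set if and only if $A$ is a $k$-$CR$-set for every $k\in\mathbb{N}$; consequently it suffices to prove the contrapositive, namely that if neither $A_{1}$ nor $A_{2}$ is a $CR$-set, then $A_{1}\cup A_{2}$ is not a $CR$-set. So assume that $A_{1}$ is not a $k_{1}$-$CR$-set and $A_{2}$ is not a $k_{2}$-$CR$-set for some $k_{1},k_{2}\in\mathbb{N}$. I will then exhibit a single $k\in\mathbb{N}$, depending only on $k_{1}$ and $k_{2}$, such that $A_{1}\cup A_{2}$ is not a $k$-$CR$-set; that is, for every $r,m\in\mathbb{N}$ I must produce a family $F\in\mathcal{P}_{f}\left(\,^{\mathbb{N}}S\right)$ with $\left|F\right|\leq k$ for which no choice of $\overrightarrow{a}=\left(a_{1},\ldots,a_{m+1}\right)\in S^{m+1}$ and $t_{1}<\cdots<t_{m}\leq r$ satisfies $a_{1}f\left(t_{1}\right)\cdots a_{m}f\left(t_{m}\right)a_{m+1}\in A_{1}\cup A_{2}$ for all $f\in F$. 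Call such an $F$ a \emph{blocking family} for $A_{1}\cup A_{2}$ at $\left(r,m\right)$.

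Fix $r$ and $m$. Since $A_{1}$ fails to be a $k_{1}$-$CR$-set, for parameters $\left(r_{1},m_{1}\right)$ of my choosing there is a blocking family $F_{1}$ for $A_{1}$ at $\left(r_{1},m_{1}\right)$ with $\left|F_{1}\right|\leq k_{1}$, and similarly I obtain blocking families for $A_{2}$ of size at most $k_{2}$ at parameters I specify. The task is to splice these into a single blocking family $F$ for the union. A first attempt, $F=F_{1}\cup F_{2}$ (with $r_{1}=r$, $m_{1}=m$), fails: if $\left(\overrightarrow{a},\overrightarrow{t}\right)$ kept every configuration of $F$ inside $A_{1}\cup A_{2}$, then the defining property of $F_{1}$ tells us only that \emph{some} $f\in F_{1}$ gives a configuration lying outside $A_{1}$ --- possibly inside $A_{2}$ --- and symmetrically for $F_{2}$, which is not a contradiction. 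The remedy, following the combinatorial argument in \cite{Key-5}, is to interleave the blocking families rather than merely union them: one encodes a tuple of functions, one drawn from a blocking family for $A_{1}$ and one from a blocking family for $A_{2}$ (and, on iterating, from further such families), as a single function in $\,^{\mathbb{N}}S$ by letting it agree with the respective functions on disjoint infinite subsets of the index set; then in any configuration $a_{1}f\left(t_{1}\right)\cdots a_{m}f\left(t_{m}\right)a_{m+1}$ the factors $f\left(t_{i}\right)$ split into factors coming from the $A_{1}$-part and factors coming from the $A_{2}$-part, and --- after absorbing the constants $a_{i}$ and the ``other'' factors into new constants --- each block is itself a configuration governed by the corresponding blocking family. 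Running the defining properties of these blocking families against the $2$-colouring of the index tuples by ``the configuration lies in $A_{1}$'' versus ``it lies in $A_{2}$'' forces, once the iteration is deep enough, a configuration outside $A_{1}\cup A_{2}$; the number of functions used across the iteration bounds the desired $k$.

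The substance of the proof --- and the step I expect to be hardest --- is making this splicing work for a \emph{non-commutative} $S$. In the commutative setting of \cite{Key-5} the terms $\sum_{t\in H}f\left(t\right)$ may be freely regrouped, so an interleaved function's contribution painlessly separates into an $A_{1}$-sum plus an $A_{2}$-sum. Here the word $a_{1}f\left(t_{1}\right)a_{2}f\left(t_{2}\right)\cdots a_{m}f\left(t_{m}\right)a_{m+1}$ is rigid, so the interleaving must be arranged so that the $A_{1}$-governed and $A_{2}$-governed sub-configurations occur as genuine sub-words (contiguous once the intervening factors and constants have been collapsed), which constrains both how the index set is partitioned and how the constants $a_{i}$ are grouped; moreover one does not control in advance how many of the $m$ variable positions fall in each block, since $t_{1}<\cdots<t_{m}$ is chosen adversarially, so the blocking families for $A_{1}$ and $A_{2}$ must be invoked at a controlled range of parameters (or the construction set up so that the block sizes are forced) while still keeping $\left|F\right|\le k$ for a single $k=k\left(k_{1},k_{2}\right)$. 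Granting that this non-commutative bookkeeping can be carried through as in the commutative case, the resulting blocking family for $A_{1}\cup A_{2}$ at every $\left(r,m\right)$ contradicts the hypothesis that $A_{1}\cup A_{2}$ is a $CR$-set, completing the argument.
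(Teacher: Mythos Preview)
The paper does not supply its own argument for this theorem; the entire proof reads \cite[Theorem 2.4]{Key-5}. There is therefore no in-paper proof to compare your attempt against.

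As for your proposal itself, it is an outline rather than a proof, and the gap is one you yourself flag. The reduction to the $k$-$CR$ gradation and the contrapositive set-up are fine, and you correctly isolate the crux: in a non-commutative semigroup the word $a_{1}f(t_{1})\cdots a_{m}f(t_{m})a_{m+1}$ is rigid, so after interleaving a blocking family for $A_{1}$ with one for $A_{2}$ on disjoint index sets the resulting sub-words need not have the required shape, and the number of variable positions landing in each block can be anything from $0$ to $m$. Having said all this, you then write ``Granting that this non-commutative bookkeeping can be carried through as in the commutative case\ldots'' and declare the argument complete. That clause is exactly where the proof lives. Arranging the interleaving so that a single bound $k=k(k_{1},k_{2})$ works independently of $(r,m)$, handling the variable sub-word lengths, and iterating the two-colouring deeply enough to force an element outside $A_{1}\cup A_{2}$ is the full content of \cite[Theorem 2.4]{Key-5}; you have described the obstacle but not overcome it.
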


\begin{proof}
\cite[Theorem 2.4]{Key-5}
\end{proof}
In our work we will use the structure of Stone-\v{C}ech compactification
of discrete semigroup. Let $\left(S,\cdot\right)$ be a discrete semigroup
and $\beta S$ be the Stone-\v{C}ech compactification of the discrete
semigroup $S$ and $\cdot$ on $\beta S$ is the extension of $'\cdot'$
on $S$. The points of $\beta S$ are ultrafilters and principal ultrafilters
are identified by the points of $S$. The extension is unique extension
for which $\left(\beta S,\cdot\right)$ is compact, right topological
semigroup with $S$ contained in its topological center. That is,
for all $p\in\beta S$ the function $\rho_{p}:\beta S\to\beta S$
is continuous, where $\rho_{p}(q)=q\cdot p$ and for all $x\in S$,
the function $\lambda_{x}:\beta S\to\beta S$ is continuous, where
$\lambda_{x}(q)=x\cdot q$. For $p,q\in\beta S$, $p\cdot q=\left\{ A\subseteq S:\left\{ x\in S:x^{-1}A\in q\right\} \in p\right\} $,
where $x^{-1}A=\left\{ y\in S:x\cdot y\in A\right\} $.
\begin{defn}
Now we want to introduce a subset of $\beta S$ related to $J$-set
which is two sided ideal.
\begin{enumerate}
\item $J\left(S\right)=\left\{ p\in\beta S:\text{ for all }A\in p,A\text{ is a }J\text{-set }\right\} $
where $\left(S,\cdot\right)$ is a semigroup.
\item $CR\left(S\right)=\left\{ p\in\beta S:\text{ for all }A\in p,A\text{ is a }CR\text{-set }\right\} $
\item For $k\in\mathbb{N}$, $k-CR\left(S\right)=\left\{ p\in\beta S:\text{ for all }A\in p,A\text{ is a }k-CR\text{-set }\right\} $
\end{enumerate}
\end{defn}

\begin{thm}
Let, $\left(S,\cdot\right)$ be a semigroup. Then $CR\left(S\right)$
is a compact two sided ideal of $\beta S$ and for each $k\in\mathbb{N}$,
$k-CR\left(S\right)$ is a compact two sided ideal of $\beta S$ and
hence contain idempotents.
\end{thm}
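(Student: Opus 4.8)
The plan is to show, uniformly for $\Gamma$ equal to $CR(S)$ or to $k\text{-}CR(S)$, that $\Gamma$ is a nonempty \emph{closed} subset of $\beta S$ which is a two--sided ideal; compactness is then automatic (a closed subset of the compact space $\beta S$), and since a nonempty two--sided ideal of a semigroup is a subsemigroup, the Ellis--Numakura lemma (every nonempty compact right topological semigroup contains an idempotent) supplies the idempotents. First I would record two elementary closure properties of $CR$-sets, each witnessed by the \emph{same} constants $r,m$ as in the hypothesis: (i) if $B\subseteq A\subseteq S$ and $B$ is a $CR$-set, then $A$ is a $CR$-set, since any product landing in $B$ also lands in $A$; and (ii) if $B$ is a $CR$-set and $x\in S$, then $xB$ is a $CR$-set, because a witnessing word $b_{1}f(t_{1})\cdots b_{m}f(t_{m})b_{m+1}\in B$ yields $(xb_{1})f(t_{1})b_{2}f(t_{2})\cdots b_{m}f(t_{m})b_{m+1}\in xB$, i.e.\ the same $r$, the same $m$ and the same $t_{1}<\cdots<t_{m}$ work after replacing the first coordinate of $\overrightarrow{a}$ by $xb_{1}$. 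Both (i) and (ii) hold verbatim with ``$CR$-set'' replaced by ``$k\text{-}CR$-set'', the constants $r,m$ being those attached to the fixed $k$. Closedness of $\Gamma$ is immediate: if $p\notin\Gamma$ there is $A\in p$ failing to be a $CR$-set (resp.\ a $k\text{-}CR$-set), and then the basic clopen neighbourhood $\{q\in\beta S:A\in q\}$ of $p$ is disjoint from $\Gamma$.

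For nonemptiness I would first note that $S$ itself is a $CR$-set (take $m=r=1$, $t_{1}=1$ and arbitrary $a_{1},a_{2}$, so that $a_{1}f(1)a_{2}\in S$). By Theorem~\ref{PRCR} together with induction on the number of cells, the family $\mathcal{C}=\{A\subseteq S:A\text{ is a }CR\text{-set}\}$ is partition regular: whenever $S=A_{1}\cup\cdots\cup A_{n}$, some $A_{i}\in\mathcal{C}$. Consequently, for any $A_{1},\dots,A_{n}\notin\mathcal{C}$ we have $S\setminus\bigcup_{i=1}^{n}A_{i}\neq\emptyset$, so the clopen sets $\{p\in\beta S:A_{i}\notin p\}=\overline{S\setminus A_{i}}$ have the finite intersection property, and by compactness of $\beta S$ the intersection $CR(S)=\bigcap_{A\notin\mathcal{C}}\{p:A\notin p\}$ is nonempty. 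Since every $CR$-set is a $k\text{-}CR$-set, we get $CR(S)\subseteq k\text{-}CR(S)$, so $k\text{-}CR(S)$ is nonempty as well.

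Next I would verify that $\Gamma$ is a two--sided ideal, doing the $CR$ case (the $k\text{-}CR$ case is identical since all constants are preserved). Fix $p\in CR(S)$ and $q\in\beta S$. For the \emph{left} inclusion $q\cdot p\in CR(S)$: if $A\in q\cdot p$ then $\{x\in S:x^{-1}A\in p\}\in q$, so this set is nonempty; pick $x$ in it. Then $B:=x^{-1}A$ is a $CR$-set because $p\in CR(S)$, hence $xB$ is a $CR$-set by (ii), and $xB=x(x^{-1}A)\subseteq A$, so $A$ is a $CR$-set by (i). For the \emph{right} inclusion $p\cdot q\in CR(S)$: if $A\in p\cdot q$ then $B:=\{x\in S:x^{-1}A\in q\}\in p$, so $B$ is a $CR$-set. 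Given $k$ and $F\in\mathcal{P}_{f}\left(\,^{\mathbb{N}}S\right)$ with $\mid F\mid\leq k$, pick $\overrightarrow{b}=(b_{1},\dots,b_{m+1})$ and $t_{1}<\cdots<t_{m}\leq r$ so that $y_{f}:=b_{1}f(t_{1})\cdots b_{m}f(t_{m})b_{m+1}\in B$ for every $f\in F$; then $y_{f}^{-1}A\in q$ for each $f\in F$, and as $F$ is finite the set $D:=\bigcap_{f\in F}y_{f}^{-1}A$ lies in $q$, hence is nonempty. Choosing $z\in D$ gives $b_{1}f(t_{1})\cdots b_{m}f(t_{m})(b_{m+1}z)\in A$ for all $f\in F$, so the same $r$, $m$ and the same $t_{i}$ witness that $A$ is a $CR$-set, now with last coordinate $b_{m+1}z$.

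Putting these together, $\Gamma$ is a nonempty closed (hence compact) two--sided ideal of $\beta S$, so it is a compact right topological subsemigroup and therefore contains an idempotent. The hard part will be the right--ideal computation: one has to notice that the word $a_{1}f(t_{1})\cdots a_{m}f(t_{m})a_{m+1}$ can be modified only at its extreme coordinates --- $a_{1}$ from the left in (ii), $a_{m+1}$ from the right above --- which is exactly what the maps $x\cdot(-)$ and $(-)\cdot z$ provide, and that $\bigcap_{f\in F}y_{f}^{-1}A$ is indexed by a \emph{finite} set, so that it survives in the ultrafilter $q$. The nonemptiness step also tacitly uses partition regularity of $\mathcal{C}$ for arbitrary finite partitions, which is why the two--cell statement of Theorem~\ref{PRCR} must first be boosted by induction.
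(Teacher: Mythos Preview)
Your argument is correct and complete: closedness is immediate, nonemptiness follows from partition regularity (Theorem~\ref{PRCR}) together with the fact that $S$ itself is a $CR$-set, the two--sided ideal verifications are done by adjusting the outer coordinates $a_{1}$ and $a_{m+1}$ of the witnessing word (with the key observation that the bounds $r,m$ depend only on $k$ and not on $F$, so they transfer unchanged from $B$ to $A$), and Ellis--Numakura gives the idempotents.

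As for comparison: the paper does not give its own proof here but simply cites \cite[Theorem~2.6]{Key-5}. Your sketch is essentially the standard argument one expects in that reference, so there is no substantive methodological difference to discuss; you have supplied the details the paper omits. One small remark: in your closure property~(ii) you verify that left translates $xB$ of $CR$-sets are $CR$-sets, and you use this for the left--ideal direction via $x(x^{-1}A)\subseteq A$; this is fine, though it is marginally more direct (and closer to what the paper does later for $SCR$-sets in Theorem~\ref{ID}) to argue straight from $a_{1}f(t_{1})\cdots a_{m}f(t_{m})a_{m+1}\in x^{-1}A$ to $(xa_{1})f(t_{1})\cdots a_{m}f(t_{m})a_{m+1}\in A$ without isolating~(ii) as a separate lemma.
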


\begin{proof}
\cite[Thorem 2.6]{Key-5}
\end{proof}

\section{ABUNDANCE OF CR SET }

In \cite{key-3-1}, authors proved the abundance of arithmetic progressions
in $CR$-sets for commutative semigroup using the defintion \ref{Comm CR set}.
Here we giving an alternating proof using the Defintion \ref{new def cr}.
In \cite{Key-5}, authors proved that both the definition are equivalent.
This theorem also agrees with that fact.
\begin{thm}
\label{CA} Let $\left(S,\cdot\right)$ be a commutative semigroup,
$A\subseteq S$ be a $CR$-set in $S$ and $l\in\mathbb{N}$. Then
the collection $\left\{ \left(a,d\right):\left\{ a,a+d,a+2d,\ldots,a+ld\right\} \subseteq A\right\} $
is a $CR$-set in $S\times S$.
\end{thm}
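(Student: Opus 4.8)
The plan is to verify, straight from Definition~\ref{new def cr}(2), that
\[
 E:=\bigl\{(a,d)\in S\times S:\{a,a+d,a+2d,\ldots,a+ld\}\subseteq A\bigr\}
\]
is a $CR$-set in $S\times S$ (with the coordinatewise operation), by reducing an arbitrary instance of the $CR$-condition for $E$ to a single application of the $CR$-condition for $A$. For $g\in{}^{\mathbb{N}}(S\times S)$ I write $g(t)=(g^{(1)}(t),g^{(2)}(t))$.

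First I would fix $k\in\mathbb{N}$ and, using that $A$ is a $CR$-set, choose $r\in\mathbb{N}$ as furnished by Definition~\ref{new def cr}(2) for the parameter $k(l+1)$; the aim is to show this same $r$ works for $E$ with parameter $k$. Given $G\in\mathcal{P}_f({}^{\mathbb{N}}(S\times S))$ with $|G|\le k$, I would write $G=\{g_1,\ldots,g_n\}$ with $n\le k$ and put $p_i:=g_i^{(1)}$, $q_i:=g_i^{(2)}$. The central device is to fix once and for all an auxiliary sequence $\gamma\in{}^{\mathbb{N}}S$ (say $\gamma:=p_1$) and then apply the $CR$-property of $A$ to the family $F:=\{f_{i,s}:1\le i\le n,\ 0\le s\le l\}$, where
\[
 f_{i,s}(t):=p_i(t)+s\bigl(q_i(t)+\gamma(t)\bigr)
\]
(so $f_{i,0}=p_i$); note that $|F|\le n(l+1)\le k(l+1)$.

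This application would produce $a\in S$ and $H\in\mathcal{P}_f(\mathbb{N})$ with $\max H\le r$ such that $a+\sum_{t\in H}f_{i,s}(t)\in A$ for all $i\le n$ and $s\le l$. I would then set $b:=a$, $c:=\sum_{t\in H}\gamma(t)$, and $H':=H$, so that $(b,c)\in S\times S$ and $\max H'\le r$. Using commutativity to regroup the finite sum, for each $i$ and each $s\in\{0,1,\ldots,l\}$ one has
\[
 \Bigl(b+\sum_{t\in H'}p_i(t)\Bigr)+s\Bigl(c+\sum_{t\in H'}q_i(t)\Bigr)
 =a+\sum_{t\in H}\bigl(p_i(t)+sq_i(t)+s\gamma(t)\bigr)
 =a+\sum_{t\in H}f_{i,s}(t)\in A ,
\]
which says exactly that $(b,c)+\sum_{t\in H'}g_i(t)\in E$. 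Since $G$, and then $k$, were arbitrary, this establishes that $E$ is a $CR$-set in $S\times S$.

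The hard part is the stray term $sc$ that appears when $(b+\sum p_i)+s(c+\sum q_i)$ is expanded: feeding $A$ the naive family $\{p_i+sq_i\}$ leaves this $sc$ unaccounted for, and $c$ cannot be prescribed after the fact, since it must be a single element serving every index $i$ and every $s$ simultaneously. The way around this — the one step that needs care — is to let the combinatorial richness of $A$ itself manufacture $c$: folding the extra summand $s\gamma(t)$ into $f_{i,s}$ forces $c=\sum_{t\in H}\gamma(t)$ to be precisely the missing piece. The identical argument works for $k$-$CR$ sets, using that $A$ is a $k(l+1)$-$CR$ set in place of the $CR$ hypothesis.
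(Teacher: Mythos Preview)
Your proof is correct and follows essentially the same route as the paper's: both arguments introduce an auxiliary ``seed'' to manufacture the second coordinate $c$, encode it into an enlarged family of single-variable test sequences, apply the $CR$-property of $A$ to that family, and regroup. The only cosmetic differences are that you use a sequence $\gamma$ where the paper uses a constant $b$ (yielding $c=\sum_{t\in H}\gamma(t)$ versus $c=b\,|H|$), and that you run $s$ over $\{0,1,\ldots,l\}$ rather than $\{1,\ldots,l\}$ --- which in fact makes your version slightly more careful, since membership in $E$ requires the base point $a$ itself to lie in $A$.
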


\begin{proof}
Let $C=\left\{ \left(a,d\right):\left\{ a,a+d,a+2d,\ldots,a+ld\right\} \subseteq A\right\} \subset S\times S$.
To show that $C$ is a $CR$-set in $S\times S$, we have to show
there exists $r\in\mathbb{N}$, for every $k-$many functions $f_{1},f_{2},\ldots,f_{k}\in\left(S\times S\right)^{\mathbb{N}}$
there exist $H\subseteq\left\{ 1,2,\ldots,r\right\} $ and $\overline{a}\in S\times S$

\[
\overline{a}+\sum_{t\in H}f_{i}(t)\in C\text{ for all }i\in\left\{ 1,2,\ldots,k\right\} .
\]

Let for each $i\in\left\{ 1,2,\ldots,k\right\} ,$ $g_{i},g_{i}'\in S^{\mathbb{N}}$
be two component functions of $f_{i}$ so that $f_{i}=\left(g_{i},g_{i}'\right)$.
Let us choose $b\in S$ and set 

\[
p_{i,j}=g_{i}+j\left(b+g_{i}'\right)\text{ where }i\in\left\{ 1,2,\ldots,k\right\} \text{ and }j\in\left\{ 1,2,\ldots,l\right\} .
\]

Then $p_{i,j}\in S^{\mathbb{N}}$ for all $i\in\left\{ 1,2,\ldots,k\right\} $
and $j\in\left\{ 1,2,\ldots,l\right\} $. Now consider $lk\in\mathbb{N}$.
Since $A$ is a $CR$-set in $S$, then there exists $r\in\mathbb{N}$,
such that for these $lk-$many functions $\left(p_{i,j}\right)_{i=1,j=1}^{k,l}$,
there exist $a\in S$ and $H\subseteq\left\{ 1,2,\ldots,r\right\} $
such that 

\[
a+\sum_{t\in H}p_{i,j}(t)\in A,\text{ for every }i\in\left\{ 1,2,\ldots,k\right\} ,j\in\left\{ 1,2,\ldots,l\right\} .
\]

Which implies that for each $i$,

\[
a+\sum_{t\in H}\left(g_{i}(t)+j\left(b+g_{i}'\right)(t)\right)\in A,\text{ for all }j\in\left\{ 1,2,\ldots,l\right\} .
\]
Therefore for each $i$,

\[
\left(a+\sum_{t\in H}g_{i}(t)\right)+j\left(b\mid H\mid+\sum_{t\in H}g_{i}'(t)\right)\in A,\text{ for all }j\in\left\{ 1,2,\ldots,l\right\} .
\]

Then we have for each $i$,
\[
\left(a+\sum_{t\in H}g_{i}(t),b\mid H\mid+\sum_{t\in H}g_{i}'(t)\right)\in C,
\]

i.e, for each $i$,
\[
\left(a,b\mid H\mid\right)+\left(\sum_{t\in H}\left(g_{i},g_{i}'\right)(t)\right)\in C
\]

Therefore we get $\overline{a}=\left(a,b\mid H\mid\right)\in S\times S$
and $r$, $H$ as above we have
\[
\overline{a}+\sum_{t\in H}f_{i}(t)\in C\text{ for all }i\in\left\{ 1,2,\ldots,k\right\} .
\]
\end{proof}
In the following we turn our attention to non-commutative semigroups.
First we need to recall the definition of progression in non-commutative
semigroup \cite[Definition 2.1(4)]{key-1}.
\begin{defn}
Let $\left(S,\cdot\right)$ be an arbitrary semigroup. Given $l\in\mathbb{N}$,
a set $B\subseteq S$ is a \emph{length $l$ progression }if there
exist $m\in\mathbb{N}$, $\overline{a}=\left(a_{1},a_{2}\right)\in S^{2}$,
and $d\in S$ such that $B=\left\{ a_{1}d^{t}a_{2}:t\in\left\{ 1,2,\ldots,l\right\} \right\} $.
\end{defn}

It could be preferred generalization of abundance of progression in
a $CR$-set $A$ in $S$ if the collection $\left\{ \left(a_{1},a_{2},d\right):\left\{ a_{1}d^{t}a_{2}:t\in\left\{ 1,2\right\} \right\} \subseteq A\right\} $
is $CR$-set in $S\times S\times S$. But we are unable to solve the
above problem. Putting some extra condition on $CR$-sets we achieve
an analoge result.
\begin{defn}
Let $\left(S,\cdot\right)$ be a semigroup and let $A\subseteq S$.
\begin{enumerate}
\item Then $A$ is a\emph{ strong combinatorially rich set} (a $SCR$-set)
if for each $k\in\mathbb{N}$ there exists $r\in\mathbb{N}$ such
that for each $F\in\mathcal{P}_{f}\left(\,^{\mathbb{N}}S\right)$
with $\mid F\mid\leq k$, there exist $\overline{a}=\left(a_{1},a_{2}\right)\in S^{2}$,
and $t\leq r$ in $\mathbb{N}$ such that for each $f\in F$, 
\[
a_{1}f\left(t\right)a_{2}\in A.
\]
\item Then $A$ is a\emph{ k-strong combinatorially rich set} (a $k-SCR$-set)
if there exists $r\in\mathbb{N}$ such that for each $F\in\mathcal{P}_{f}\left(\,^{\mathbb{N}}S\right)$
with $\mid F\mid\leq k$, there exist $\overline{a}=\left(a_{1},a_{2}\right)\in S^{2}$,
and $t\leq r$ in $\mathbb{N}$ such that for each $f\in F$,
\[
a_{1}f\left(t\right)a_{2}\in A.
\]
\end{enumerate}
\end{defn}

Clearly $SCR$-sets are $CR$-sets. Since $SCR$-set is a particular
$CR$-set $\left(m=1\right)$.
\begin{thm}
\label{PRSCR}Let $\left(S,\cdot\right)$ be a semigroup and let $A_{1}$
and $A_{2}$ be subsets of $S$. If $A_{1}\cup A_{2}$ is a $SCR$-set
in $S$, then either $A_{1}$ or $A_{2}$ is a $SCR$-set in $S$. 
\end{thm}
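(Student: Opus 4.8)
The plan is to imitate the proof of Theorem~\ref{PRCR} (that is, \cite[Theorem 2.4]{Key-5}), specialising its construction to the value $m=1$, which is exactly what marks out $SCR$-sets among $CR$-sets. I will use two trivial facts: if $A$ is a $k$-$SCR$-set and $k'\le k$ then $A$ is a $k'$-$SCR$-set, so that a failure to be $SCR$ occurs at some finite level and survives at all larger levels; and $A$ is $SCR$ if and only if $A$ is $k$-$SCR$ for every $k$. Suppose, towards a contradiction, that $A_{1}\cup A_{2}$ is $SCR$ while neither $A_{1}$ nor $A_{2}$ is. Then there is $\kappa\in\mathbb{N}$ such that neither $A_{1}$ nor $A_{2}$ is $\kappa$-$SCR$; unravelling, for every $r\in\mathbb{N}$ and each $i\in\{1,2\}$ there is $F^{(i)}_{r}\in\mathcal{P}_{f}(\,^{\mathbb{N}}S)$ with $\mid F^{(i)}_{r}\mid\le\kappa$ such that for all $t\le r$ and all $b_{1},b_{2}\in S$ there is $f\in F^{(i)}_{r}$ with $b_{1}f(t)b_{2}\notin A_{i}$.

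Next, apply the hypothesis that $A_{1}\cup A_{2}$ is $SCR$ at a sufficiently large level (the product construction below uses level $\kappa^{2}$; the matching argument of \cite{Key-5} may call for a larger one) to obtain the corresponding $r\in\mathbb{N}$, and fix $F^{(1)}:=F^{(1)}_{r}=\{e_{1},\dots,e_{p}\}$ and $F^{(2)}:=F^{(2)}_{r}=\{f_{1},\dots,f_{q}\}$ with $p,q\le\kappa$. Form the test family $F:=\{\,e_{i}\cdot f_{j}:i\le p,\ j\le q\,\}$, where $(e_{i}\cdot f_{j})(n):=e_{i}(n)f_{j}(n)$ in $S$, so that $\mid F\mid\le\kappa^{2}$. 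Applying the $SCR$-property of $A_{1}\cup A_{2}$ to $F$ with this $r$ yields $t\le r$ and $a_{1},a_{2}\in S$ with
\[
a_{1}\,e_{i}(t)\,f_{j}(t)\,a_{2}\in A_{1}\cup A_{2}\qquad\text{for all }i\le p,\ j\le q .
\]
Reading the word $a_{1}e_{i}(t)f_{j}(t)a_{2}$ as $a_{1}\cdot e_{i}(t)\cdot\bigl(f_{j}(t)a_{2}\bigr)$ and using that $F^{(1)}$ refutes $A_{1}$ at the triple $\bigl(t,a_{1},f_{j}(t)a_{2}\bigr)$ gives, for each $j$, an index $\iota(j)$ with $a_{1}e_{\iota(j)}(t)f_{j}(t)a_{2}\notin A_{1}$; reading it as $\bigl(a_{1}e_{i}(t)\bigr)\cdot f_{j}(t)\cdot a_{2}$ and using that $F^{(2)}$ refutes $A_{2}$ at $\bigl(t,a_{1}e_{i}(t),a_{2}\bigr)$ gives, for each $i$, an index $\upsilon(i)$ with $a_{1}e_{i}(t)f_{\upsilon(i)}(t)a_{2}\notin A_{2}$.

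The heart of the matter --- and the one step that is not bookkeeping --- is to convert the two selector maps $\iota$ and $\upsilon$ into a single pair $(i^{\ast},j^{\ast})$ with $a_{1}e_{i^{\ast}}(t)f_{j^{\ast}}(t)a_{2}\notin A_{1}\cup A_{2}$, which would contradict the displayed membership. The naive candidate, a fixed point of $\upsilon\circ\iota$, need not exist, so one must invoke (and, where the length parameter intervenes, adapt) the finite combinatorial matching argument from the proof of Theorem~\ref{PRCR} in \cite{Key-5}, which is designed precisely to defeat this obstruction by enlarging the test family through a further product construction and then forcing compatibility of the resulting selectors. The work specific to the present theorem is to check that this argument survives the restriction $m=1$; it does, because every configuration that occurs --- both in the hypothesis on $A_{1}\cup A_{2}$ and in the ``bad'' families for $A_{1}$ and $A_{2}$ --- has the single-occurrence form $b_{1}f(t)b_{2}$, so the construction of \cite{Key-5} can be run with the length parameter held fixed equal to $1$ and never produces longer configurations. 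The contradiction it yields shows that $A_{1}$ or $A_{2}$ is an $SCR$-set.
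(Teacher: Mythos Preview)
Your proposal is correct and takes essentially the same approach as the paper: the paper's proof consists of the single sentence ``The proof is similar to the proof of theorem~\ref{PRCR},'' and you have done exactly that, sketching how the argument of \cite[Theorem~2.4]{Key-5} specialises to the case $m=1$ and observing that this specialisation is harmless because every configuration produced along the way retains the single-occurrence form $b_{1}f(t)b_{2}$.
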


\begin{proof}
The proof is similar to the proof of theorem \ref{PRCR}.
\end{proof}
Next we define two new subsets of $\beta S$ corresponding to $SCR$-sets.
\begin{defn}
$\left(S,\cdot\right)$ be a semigroup
\begin{enumerate}
\item $SCR\left(S\right)=\left\{ p\in\beta S:\forall A\in p,A\text{ is a }SCR\text{ set}\right\} $
\item For $k\in\mathbb{N}$, $k-SCR\left(S\right)=\left\{ p\in\beta S:\forall A\in p,A\text{ is a }k-SCR\text{ set}\right\} $ 
\end{enumerate}
\end{defn}

Clearly $SCR\left(S\right)=\bigcup_{k\in\mathbb{N}}k-SCR\left(S\right)$.
\begin{thm}
\label{ID}$\left(S,\cdot\right)$ be a semigroup. Then for each $k\in\mathbb{N}$,
$k-SCR\left(S\right)$ is a two sided ideal of $\beta S$.
\end{thm}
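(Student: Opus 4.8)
The plan is to follow the standard template used for the ideals $J(S)$, $CR(S)$ and $k\text{-}CR(S)$: verify nonemptiness, then check the left- and right-ideal inclusions directly from the formula $p\cdot q=\{A\subseteq S:\{x\in S:x^{-1}A\in q\}\in p\}$ for multiplication in $\beta S$. Two elementary closure properties of $k\text{-}SCR$ sets will be used throughout. (i) A superset of a $k\text{-}SCR$ set is a $k\text{-}SCR$ set, with the same $r$ and the same witnesses. (ii) If $C$ is $k\text{-}SCR$ with bound $r$ and $x\in S$, then the left translate $xC=\{xc:c\in C\}$ is $k\text{-}SCR$ with bound $r$, since $a_{1}f(t)a_{2}\in C$ for all $f\in F$ forces $(xa_{1})f(t)a_{2}\in xC$ for all $f\in F$.

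For nonemptiness, note first that $S$ is itself a $k\text{-}SCR$ set: take $r=1$ and, for any $F$ with $|F|\le k$, put $t=1$ and choose any $a_{1},a_{2}\in S$, as $a_{1}f(1)a_{2}\in S$ automatically. Combine this with the partition regularity of $k\text{-}SCR$ sets --- the $k$-fixed analogue of Theorem \ref{PRSCR}, proved by the same argument (alternatively, it suffices that $SCR(S)\neq\emptyset$, since $SCR(S)\subseteq k\text{-}SCR(S)$). Partition regularity says the family $\mathcal{B}$ of subsets of $S$ that are \emph{not} $k\text{-}SCR$ is closed under finite unions, and $S\notin\mathcal{B}$. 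Hence, writing $\overline{E}=\{p\in\beta S:E\in p\}$, the clopen sets $\overline{S\setminus B}$, $B\in\mathcal{B}$, have the finite intersection property in the compact space $\beta S$, so their intersection is nonempty; and that intersection is exactly $k\text{-}SCR(S)$, since $p\in k\text{-}SCR(S)$ iff no member of $p$ lies in $\mathcal{B}$ iff $S\setminus B\in p$ for every $B\in\mathcal{B}$. In particular $k\text{-}SCR(S)$ is closed, hence a compact subsemigroup, so it contains idempotents.

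For the left-ideal inclusion, let $p\in k\text{-}SCR(S)$, $q\in\beta S$ and $A\in q\cdot p$. Then $\{x\in S:x^{-1}A\in p\}\in q$ is nonempty; fix $x$ in it. Then $x^{-1}A\in p$ is a $k\text{-}SCR$ set, so by (ii) the translate $x(x^{-1}A)$ is $k\text{-}SCR$, and since $x(x^{-1}A)\subseteq A$, property (i) gives that $A$ is $k\text{-}SCR$. As $A\in q\cdot p$ was arbitrary, $q\cdot p\in k\text{-}SCR(S)$.

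For the right-ideal inclusion, let $p\in k\text{-}SCR(S)$, $q\in\beta S$ and $A\in p\cdot q$. Then $B:=\{x\in S:x^{-1}A\in q\}\in p$, so $B$ is a $k\text{-}SCR$ set; fix a bound $r$ witnessing this. I claim $A$ is $k\text{-}SCR$ with the same $r$. Given $F\in\mathcal{P}_{f}(^{\mathbb{N}}S)$ with $|F|\le k$, apply the $k\text{-}SCR$ property of $B$ to get $(b_{1},b_{2})\in S^{2}$ and $t\le r$ with $b_{1}f(t)b_{2}\in B$, i.e.\ $(b_{1}f(t)b_{2})^{-1}A\in q$, for every $f\in F$. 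This is the step I expect to be the main obstacle: unlike the left-ideal case one cannot absorb $f$ into a left translate, so a single right multiplier must serve all $f\in F$ simultaneously. The resolution is that $F$ is finite and $q$ is an ultrafilter, so $\bigcap_{f\in F}(b_{1}f(t)b_{2})^{-1}A\in q$ and is therefore nonempty; choosing $c$ in it gives $b_{1}f(t)b_{2}c\in A$ for every $f\in F$, so $(a_{1},a_{2})=(b_{1},b_{2}c)$ together with $t\le r$ witnesses the configuration for $F$. Hence $A$ is $k\text{-}SCR$, whence $p\cdot q\in k\text{-}SCR(S)$. Together with the left-ideal inclusion and nonemptiness, this shows that $k\text{-}SCR(S)$ is a two-sided ideal of $\beta S$.
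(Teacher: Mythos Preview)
Your proof is correct and follows essentially the same approach as the paper: for $p\cdot q$ you use that $B=\{x:x^{-1}A\in q\}\in p$ is $k\text{-}SCR$, obtain $b_{1}f(t)b_{2}\in B$ for all $f\in F$, intersect the finitely many $(b_{1}f(t)b_{2})^{-1}A\in q$ to extract a common right multiplier $c$, and set $a_{2}=b_{2}c$; for $q\cdot p$ you use that some $x^{-1}A\in p$ is $k\text{-}SCR$ and prepend $x$ on the left---exactly the paper's computations, just repackaged via your closure properties (i) and (ii). The only addition on your side is the nonemptiness/compactness argument via partition regularity, which the paper does not include in this proof but invokes separately afterward.
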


\begin{proof}
Let $p\in k-SCR\left(S\right)$ and $q\in\beta S$. We want to show
$pq,qp\in k-SCR\left(S\right)$.

To show, $pq\in k-SCR\left(S\right)$, Let $A\in pq$, so $B=\left\{ x\in S:x^{-1}A\in q\right\} \in p$.
So $B$ is a $k-SCR$ set. Then there exists $r\in\mathbb{N}$ such
that for each $F\in\mathcal{P}_{f}\left(\,^{\mathbb{N}}S\right)$
with $\mid F\mid\leq k$, there exist $\overline{a}=\left(a_{1},a_{2}\right)\in S^{2}$,
and $t_{1}\leq r$ in $\mathbb{N}$ such that for each $f\in F$,
\[
a_{1}f\left(t_{1}\right)a_{2}\in B.
\]

Then 
\[
\bigcap_{f\in F}\left(a_{1}f\left(t_{1}\right)a_{2}\right)^{-1}A\in q.
\]

So we have an element 
\[
y\in\bigcap_{f\in F}\left(a_{1}f\left(t_{1}\right)a_{2}\right)^{-1}A.
\]

So, 
\[
a_{1}f\left(t_{1}\right)a_{2}y\in A,\forall f\in F.
\]

Let us define, $b_{1}=a_{1}$ and $b_{2}=a_{2}y$. Then $\overline{b}=\left(b_{1},b_{2}\right)\in S^{2}$

So, 
\[
b_{1}f\left(t_{1}\right)b_{2}\in A,\forall f\in F.
\]

Therefore $A$ is a $k-SCR$ set. And $A$ is arbitrary element from
$pq$. So, $pq\in k-SCR\left(S\right)$.

Now, If $A\in qp$, Then $B=\left\{ x\in S:x^{-1}A\in p\right\} \in q$.
Then picking $x\in B$, $x^{-1}A\in p$. So, $x^{-1}A$ is a $k-SCR$
set.

Then there exists $r\in\mathbb{N}$ such that for each $F\in\mathcal{P}_{f}\left(\,^{\mathbb{N}}S\right)$
with $\mid F\mid\leq k$, there exist $\overline{a}=\left(a_{1},a_{2}\right)\in S^{2}$,
and $t_{1}\leq r$ in $\mathbb{N}$ such that for each $f\in F$,
\[
a_{1}f\left(t_{1}\right)a_{2}\in x^{-1}A.
\]

So, 
\[
xa_{1}f\left(t_{1}\right)a_{2}\in A,\forall f\in F.
\]

Let us define, $b_{1}=xa_{1}$ and $b_{2}=a_{2}$. Then $\overline{b}=\left(b_{1},b_{2}\right)\in S^{2}$

So, 
\[
b_{1}f\left(t_{1}\right)b_{2}\in A,\forall f\in F.
\]

Therefore $A$ is a $k-SCR$ set. And $A$ is arbitrary element from
$qp$. So, $qp\in k-SCR\left(S\right)$.
\end{proof}
From theorem \ref{PRSCR} and theorem \ref{ID}, we can say $SCR\left(S\right)$
is a compact two sided ideal of $\beta S$.

Now we are going to prove our main result.
\begin{thm}
Let $\left(S,\cdot\right)$ be an arbitrary semigroup and $A\subseteq S$.
Let $A$ be a $SCR$-set in $S$. Then the collection $\left\{ \left(a_{1},a_{2},d\right):\left\{ a_{1}d^{t}a_{2}:t\in\left\{ 1,2\right\} \right\} \subseteq A\right\} $
is a $CR$-set in $S\times S\times S$.
\end{thm}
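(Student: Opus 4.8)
The plan is to prove the slightly stronger statement that the collection in the theorem --- call it $C$, so that $\left(a_{1},a_{2},d\right)\in C$ precisely when $a_{1}da_{2}\in A$ and $a_{1}d^{2}a_{2}\in A$ --- is an $SCR$-set in $S\times S\times S$, witnessed with $m=1$ in the definition; since $SCR$-sets are $CR$-sets this yields the theorem. So fix $k\in\mathbb{N}$ and a finite family $f_{1},\dots,f_{k}$ of functions $\mathbb{N}\to S\times S\times S$, and write $f_{i}=\left(g_{i},h_{i},e_{i}\right)$ with $g_{i},h_{i},e_{i}\colon\mathbb{N}\to S$. Fix an arbitrary element $c\in S$ and put $w=c\cdot c$. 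The first step is to apply the defining property of the $SCR$-set $A$, with the value $2k$ in place of $k$, to the $2k$ auxiliary functions
\[
F_{i}(n)=g_{i}(n)\,w\,e_{i}(n)\,w\,h_{i}(n),\qquad F_{i}'(n)=g_{i}(n)\,w\,e_{i}(n)\,w\,e_{i}(n)\,w\,h_{i}(n)\qquad(1\le i\le k).
\]
This yields $r\in\mathbb{N}$ depending only on $k$, an element $\left(b_{1},b_{2}\right)\in S^{2}$, and $t\le r$, such that $b_{1}F_{i}(t)b_{2}\in A$ and $b_{1}F_{i}'(t)b_{2}\in A$ for every $i\le k$.

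Next I would read off the witnesses for $C$. Put $\alpha_{1}=\left(b_{1},c,c\right)$ and $\alpha_{2}=\left(c,b_{2},c\right)$ in $S\times S\times S$ and keep the same index $t$. Multiplying coordinate-wise gives $\alpha_{1}\,f_{i}(t)\,\alpha_{2}=\left(b_{1}g_{i}(t)c,\,c\,h_{i}(t)b_{2},\,c\,e_{i}(t)c\right)$; write $\left(u_{i},v_{i},z_{i}\right)$ for this triple. Using associativity together with $cc=w$ to collapse the padding between consecutive factors,
\[
u_{i}z_{i}v_{i}=b_{1}\,g_{i}(t)\,w\,e_{i}(t)\,w\,h_{i}(t)\,b_{2}=b_{1}F_{i}(t)b_{2}\in A,
\]
\[
u_{i}z_{i}^{2}v_{i}=b_{1}\,g_{i}(t)\,w\,e_{i}(t)\,w\,e_{i}(t)\,w\,h_{i}(t)\,b_{2}=b_{1}F_{i}'(t)b_{2}\in A.
\]
Hence $\left(u_{i},v_{i},z_{i}\right)\in C$, i.e.\ $\alpha_{1}f_{i}(t)\alpha_{2}\in C$, for every $i\le k$. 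Since $r$ was chosen to depend only on $k$ and the whole construction works for an arbitrary finite family of at most $k$ functions, $C$ is a $k$-$SCR$-set; as $k\in\mathbb{N}$ was arbitrary, $C$ is an $SCR$-set, hence in particular a $CR$-set, in $S\times S\times S$.

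The one genuine obstacle is the absence of an identity in $S$: one would like to realise the triple $\left(b_{1}g_{i}(t),\,h_{i}(t)b_{2},\,e_{i}(t)\right)$ directly as $\alpha_{1}f_{i}(t)\alpha_{2}$, but without an identity the superfluous coordinates of $\alpha_{1},\alpha_{2}$ cannot be made to vanish. The device of padding every coordinate of $\alpha_{1},\alpha_{2}$ by the fixed element $c$ and pre-absorbing the resulting blocks $w=cc$ into the test functions $F_{i},F_{i}'$ is exactly what repairs this, and it is here that the full $SCR$-hypothesis is used rather than merely $CR$: a $CR$-set would only return a longer product $b_{1}F(t_{1})b_{2}F(t_{2})\cdots b_{m}F(t_{m})b_{m+1}$ with an uncontrolled number $m$ of insertions, and then the quadratic term $z_{i}^{2}$ would no longer match a single value of a test function. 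I expect the same scheme, applied instead to the $lk$ functions $g_{i}(n)\,\left(w\,e_{i}(n)\right)^{j}\,w\,h_{i}(n)$ for $1\le j\le l$, to show more generally that $\left\{ \left(a_{1},a_{2},d\right):a_{1}d^{t}a_{2}\in A\text{ for }1\le t\le l\right\}$ is an $SCR$-set in $S\times S\times S$.
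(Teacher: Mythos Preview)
Your proof is correct and follows essentially the same route as the paper's: both arguments show the stronger fact that the collection is an $SCR$-set by constructing $2k$ auxiliary functions from the components of the $f_i$ together with fixed padding elements, applying the $SCR$-property of $A$ with parameter $2k$, and then reassembling the resulting data into witnesses $\alpha_{1},\alpha_{2},t$ in $S\times S\times S$. The only cosmetic difference is that you use a single padding element $c$ (and $w=c^{2}$) where the paper uses four independent elements $a_{12},a_{13},a_{21},a_{23}$; your version is a mild simplification of the same idea.
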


\begin{proof}
Let, $D=\left\{ \left(a_{1},a_{2},d\right):\left\{ a_{1}d^{t}a_{2}:t\in\left\{ 1,2\right\} \right\} \subseteq A\right\} $.

To show that $D$ is a $SCR$-set in $S\times S\times S$, we have
to show that, For every $k\in\mathbb{N}$ there exists $r\in\mathbb{N}$,
if $f_{i}\in\left(S\times S\times S\right)^{\mathbb{N}}$ for all
$i\in\left\{ 1,2,\ldots,k\right\} $, there exist $\overrightarrow{a}=\left(\overline{a_{1}},\overline{a_{2}}\right)\in\left(S\times S\times S\right)^{2}$
and $t\leq r$ 

\[
\overline{a_{1}}f_{i}\left(t\right)\overline{a_{2}}\in D,\text{ where }i\in\left\{ 1,2,\ldots,k\right\} .
\]

Let, $f_{i}=(h_{i},g_{i},k_{i})$ where for each $i\in\left\{ 1,2,\ldots,k\right\} $
$h_{i},g_{i},k_{i}\in S^{\mathbb{N}}$.

Let us choose $a_{12},a_{13},a_{21},a_{23}\in S$ and set

\[
p_{i}(t)=h_{i}(t)a_{21}a_{13}k_{i}(t)a_{23}a_{13}k_{i}(t)a_{23}a_{12}g_{i}(t)=h_{i}(t)a_{21}\left(a_{13}k_{i}(t)a_{23}\right)^{2}a_{12}g_{i}(t)
\]
 and 
\[
q_{i}(t)=h_{i}(t)a_{21}a_{13}k_{i}(t)a_{23}a_{12}g_{i}(t)\text{ where }i\in\left\{ 1,2,\ldots,k\right\} 
\]
.

Since $A$ is a $SCR$-set in $S$, then for this $2k\in\mathbb{N}$
there exists $r\in\mathbb{N},$ For $F=\left\{ p_{i},q_{i}:i\in\left\{ 1,2,\ldots,k\right\} \right\} \subseteq\mathcal{P}_{f}\left(S^{\mathbb{N}}\right)$
there exist $b=\left(b_{1},b_{2}\right)\in S^{2}$ where $b_{1}=a_{11},b_{2}=a_{22}$
and $t\leq r_{2k}$ in $\mathbb{N}$ such that 

\[
b_{1}f\left(t\right)b_{2}\in A,\text{ for all }f\in F
\]

i.e, 
\[
a_{11}h_{i}(t)a_{21}\left(a_{13}k_{i}(t)a_{23}\right)^{l}a_{12}g_{i}(t)a_{22}\in A,i\in\left\{ 1,2,\ldots,k\right\} ,l\in\left\{ 1,2\right\} .
\]

Then from definition of $D$,

\[
\left(a_{11}h_{i}(t)a_{21},a_{12}g_{i}(t)a_{22},a_{13}k_{i}(t)a_{23}\right)\in D,i\in\left\{ 1,2,\ldots,k\right\} 
\]

i,e, 
\[
\left(a_{11},a_{12},a_{13}\right)\left(h_{i},g_{i},k_{i}\right)(t)\left(a_{21},a_{22},a_{23}\right)\in D,i\in\left\{ 1,2,\ldots,k\right\} 
\]

i,e, 
\[
\overline{a_{1}}f_{i}\left(t\right)\overline{a_{2}}\in D,i\in\left\{ 1,2,\ldots,k\right\} 
\]

Where $\overline{a_{1}}=\left(a_{11},a_{12},a_{13}\right),\overline{a_{2}}=\left(a_{21},a_{22},a_{23}\right)\in S\times S\times S$.

So $D$ is $SCR$-set in $S\times S\times S$. Since $SCR$-sets are
$CR$-sets also, Then $D$ is also $CR$-set in $S\times S\times S$.
\end{proof}
We can extent this theorem for arbitrary length progression easily.
\begin{thm}
Let $\left(S,\cdot\right)$ be an arbitrary semigroup and $A\subseteq S$.
Let $A$ be a $SCR$-set in $S$. Then for each $l\in\mathbb{N}$
the collection 
\[
\left\{ \left(a_{1},a_{2},d\right):\left\{ a_{1}d^{t}a_{2}:t\in\left\{ 1,2,\ldots,l\right\} \right\} \subseteq A\right\} 
\]
 is $CR$-set in $S\times S\times S$.
\end{thm}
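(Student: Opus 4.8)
The plan is to mimic the proof of the length-$2$ case verbatim, only making the combinatorial bookkeeping depend on $l$ rather than being hard-coded to exponents $1$ and $2$. Write $D=\left\{\left(a_{1},a_{2},d\right):\left\{a_{1}d^{t}a_{2}:t\in\left\{1,2,\ldots,l\right\}\right\}\subseteq A\right\}$. As before, it suffices to show $D$ is an $SCR$-set in $S\times S\times S$, since $SCR$-sets are $CR$-sets. So fix $k\in\mathbb{N}$ and take $f_{i}=(h_{i},g_{i},k_{i})\in(S\times S\times S)^{\mathbb{N}}$ for $i\in\{1,2,\ldots,k\}$, with $h_{i},g_{i},k_{i}\in S^{\mathbb{N}}$.

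Next I would introduce parameters $a_{12},a_{13},a_{21},a_{23}\in S$ and, for each $i\in\{1,\ldots,k\}$ and each exponent $l'\in\{1,2,\ldots,l\}$, define the function
\[
p_{i,l'}(t)=h_{i}(t)\,a_{21}\,\left(a_{13}\,k_{i}(t)\,a_{23}\right)^{l'}\,a_{12}\,g_{i}(t)\in S^{\mathbb{N}}.
\]
(The length-$2$ proof is exactly the case $l'\in\{1,2\}$, with $q_{i}=p_{i,1}$ and $p_{i}=p_{i,2}$.) This gives a family $F=\left\{p_{i,l'}:i\in\{1,\ldots,k\},\ l'\in\{1,\ldots,l\}\right\}\in\mathcal{P}_{f}\left(S^{\mathbb{N}}\right)$ with $\mid F\mid\leq lk$. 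Since $A$ is an $SCR$-set in $S$, applying its defining property to the number $lk$ produces an $r\in\mathbb{N}$ so that for this $F$ there exist $b=(b_{1},b_{2})\in S^{2}$, say $b_{1}=a_{11}$ and $b_{2}=a_{22}$, and $t\leq r$ with $b_{1}\,f(t)\,b_{2}\in A$ for every $f\in F$; that is,
\[
a_{11}\,h_{i}(t)\,a_{21}\,\left(a_{13}\,k_{i}(t)\,a_{23}\right)^{l'}\,a_{12}\,g_{i}(t)\,a_{22}\in A\quad\text{for all }i\in\{1,\ldots,k\},\ l'\in\{1,\ldots,l\}.
\]

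Finally, setting $\alpha_{1,i}=a_{11}h_{i}(t)a_{21}$, $\alpha_{2,i}=a_{12}g_{i}(t)a_{22}$, and $\delta_{i}=a_{13}k_{i}(t)a_{23}$, the displayed membership says precisely that $\alpha_{1,i}\,\delta_{i}^{\,l'}\,\alpha_{2,i}\in A$ for every $l'\in\{1,\ldots,l\}$, i.e. $(\alpha_{1,i},\alpha_{2,i},\delta_{i})\in D$. Rewriting coordinatewise, $(\alpha_{1,i},\alpha_{2,i},\delta_{i})=\left(a_{11},a_{12},a_{13}\right)\,(h_{i},g_{i},k_{i})(t)\,\left(a_{21},a_{22},a_{23}\right)=\overline{a_{1}}\,f_{i}(t)\,\overline{a_{2}}$ with $\overline{a_{1}}=(a_{11},a_{12},a_{13})$ and $\overline{a_{2}}=(a_{21},a_{22},a_{23})$ in $S\times S\times S$. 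Hence $\overline{a_{1}}\,f_{i}(t)\,\overline{a_{2}}\in D$ for all $i$, so $D$ is an $SCR$-set, and therefore a $CR$-set, in $S\times S\times S$. There is no real obstacle here beyond the routine algebra: the only point that warrants a line of justification is the identity $h_{i}(t)a_{21}(a_{13}k_{i}(t)a_{23})^{l'}a_{12}g_{i}(t)$ regrouping to $(a_{11}h_{i}(t)a_{21})(a_{13}k_{i}(t)a_{23})^{l'}(a_{12}g_{i}(t)a_{22})$ after pre/post-multiplying by $a_{11},a_{22}$, which is immediate from associativity; everything else is a direct generalization of the preceding theorem.
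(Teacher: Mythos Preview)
Your proposal is correct and follows exactly the approach the paper indicates: take the $lk$ functions $p_{i,j}(t)=h_{i}(t)a_{21}\left(a_{13}k_{i}(t)a_{23}\right)^{j}a_{12}g_{i}(t)$ for $i\in\{1,\ldots,k\}$, $j\in\{1,\ldots,l\}$, apply the $SCR$ property of $A$ with parameter $lk$, and unwind coordinatewise. The paper's own proof is just a one-line remark pointing to precisely this modification of the $l=2$ argument, so your write-up is a faithful expansion of it.
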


\begin{proof}
The proof is similar to the previous. In this case we will take $lk$
functions $p_{ij}(t)=h_{i}(t)a_{21}\left(a_{13}k_{i}(t)a_{23}\right)^{j}a_{12}g_{i}(t)$
where $i\in\left\{ 1,2,\ldots,k\right\} $ and $j\in\left\{ 1,2,\ldots,l\right\} $.
\end{proof}


\begin{thebibliography}{HHDT}
\bibitem[BH]{key-1}V. Bergelson, N. Hindman. Partition regular structures
contained in large sets are abundant, J. Combin. Theory ser. A, 93(1):
18-36, 2001

\bibitem[BG]{Key-2}V. Bergelson, D. Glasscock .On the interplay between
notions of additive and multiplicative largeness and its combinatorial
applications, J. Combin. Theory Ser. A 172 (2020).

\bibitem[DD]{key-3-1}P. Debnath, D. De .Aboundance of arithmatic
progression in CR-set, arXiv:2211.12372v3 {[}math.CO{]} 30 jul 2023

\bibitem[FG]{key-4}H.Furstenberg, E. Glasner. Subset dynamics and
van der warden\textquoteright s the orem. In topological dynamics
and applications ( Minneapolis, MN, 1995 ), volume 215 of contemp.
math., pages 197-203. Amer. Math. Soc, Prov idence, RI, 1998

\bibitem[HHDT]{Key-5}N. Hindman, H. Hosseini, D. Strauss, M. A. Tootkaboni
.Combinatorially rich sets in arbitrary semigroups, Semigroup Forum
107 (2023), 127-143.

\bibitem{key-1}N. Hindman, L. Legette Jones, D. Strauss. The Relationships
Among Many Notions of Largeness for Subsets of a Semigroup, Semigroup
Forum 99 (2019), 9-31.

\bibitem[HS]{key-6}N.Hindman, D.Strauss, Algebra in the Stone-\v{C}ech
Compactification. de Gruyter Textbook. Walter de Gruyter \& Co., Berlin,
2012. Theory and applications, Second revised and extended edition
{[}of MR1642231{]}.

\bibitem[V]{key-7}B. van der waerden. Beweis einer Baudetschen vermutung.Nieuw
Arch. Wiskd., II. Ser., 15:212--216, 1927.

\end{thebibliography}
\end{document}